\newtheorem{thm}{Theorem}
\newtheorem{cor}[thm]{Corollary}
\theoremstyle{definition}
\newtheorem{dfn}[thm]{Definition}
\newtheorem{asm}{Assumption}
\newtheorem{rem}[thm]{Remark}
\newcommand{\N}{\mathbb{N}}
\newcommand{\R}{\mathbb{R}}
\newcommand{\Z}{\mathbb{Z}}
\newcommand{\bB}{\mathbf{B}}
\newcommand{\bS}{\mathbf{S}}
\newcommand{\bX}{\mathbf{X}}
\newcommand{\vx}{\mathbf{x}}
\newcommand{\vf}{\mathbf{f}}
\newcommand{\vp}{\mathbf{p}}
\newcommand{\balpha}{\boldsymbol\alpha}
\newcommand{\bomega}{\boldsymbol\omega}
\newcommand{\bsigma}{\boldsymbol\sigma}
\newcommand{\bphi}{\boldsymbol\phi}
\newcommand{\bpsi}{\boldsymbol\psi}
\newcommand{\mF}{\mathrm{F}}
\newcommand{\mA}{\mathrm{A}}
\newcommand{\mQ}{\mathrm{Q}}
\newcommand{\cO}{\mathcal{O}}
\newcommand{\cL}{\mathcal{L}}
\newcommand{\cQ}{\mathcal{Q}}
\newcommand{\cT}{\mathcal{T}}
\newcommand{\lint}{\llbracket}
\newcommand{\rint}{\rrbracket}
\renewcommand{\epsilon}{\varepsilon}
\renewcommand{\emph}[1]{{\it #1}}
\title{\LARGE \bf
Specialized effective Positivstellensätze for improved convergence rates of the moment-SOS hierarchy
}
\author{Corbinian Schlosser$^{1}$ and Matteo Tacchi$^{2}$
\thanks{The work of Corbinian Schlosser was supported by the European Research Council (grant REAL 947908)}% <-this % stops a space
}
\begin{document}

\maketitle

\footnotetext[1]{Corbinian Schlosser is with Inria Paris Centre, France
        {\tt\small corbinian.schlosser@inria.fr}}%
\footnotetext[2]{Matteo Tacchi is with Univ. Grenoble Alpes, CNRS, Grenoble INP (Institute of Engineering Univ. Grenoble Alpes), GIPSA-lab, France
{\tt\small matteo.tacchi@gipsa-lab.grenoble-inp.fr}}%

%%%%%%%%%%%%%%%%%%%%%%%%%%%%%%%%%%%%%%%%%%%%%%%%%%%%%%%%%%%%%%%%%%%%%%%%%%%%%%%%
\begin{abstract}
    Recently a moment-sum-of-squares hierarchy for exit location estimation of stochastic processes has been presented. When restricting to the special case of the unit ball, we show that the solutions approach the optimal value by a super-polynomial rate. To show this result we state a new effective Positivstellensatz on the sphere with quadratic degree bound based on a recent Positivstellensatz for trigonometric polynomials on the hypercube and pair it with a recent effective Positivstellensatz on the unit ball. At the present example, we aim to highlight the effectiveness of specialized Positivstellensätze for the moment-SoS hierarchy and their interplay with problem intrinsic properties.
\end{abstract}

%%%%%%%%%%%%%%%%%%%%%%%%%%%%%%%%%%%%%%%%%%%%%%%%%%%%%%%%%%%%%%%%%%%%%%%%%%%%%%%%
\section{Introduction}

In the last years, there has been increasing interest in leveraging efficient Positivstellensätze to address various classes of (nonlinear) problems, particularly those formulated within the framework of the generalized moment problem (GMP). The versatility of GMPs finds application across diverse domains, ranging from geometry, where it facilitates volume computation of semialgebraic sets as well as set separation \cite{korda2022urysohn}, \cite{henrion2009volume,lasserre2017stokes,tacchi2022exploiting}, to dynamical systems encompassing optimal control \cite{lasserre2008nonlinear},  stability analysis \cite{korda2020computing,oustry2019inner,korda2014convex,jones2021converse,schlosser2021converging, goluskin2020bounding}, as well as partial differential equations~\cite{marx2020burgers}, and more generally calculus of variations \cite{henrion2023occupation}. Moreover, GMPs play a pivotal role in studying stochastic systems and have recently offered computational insights into exit location estimation \cite{henrion2023moment}, infinite-time averaging \cite{fantuzzi2016bounds}, invariant measures computation \cite{korda2021convex}, and probability of unsafety\cite{miller2023unsafe} to name only a few.

The moment-sum-of-squares (SoS) hierarchy provides a convergent method for solving GMPs, but investigation of convergence rates beyond polynomial optimization remains scarce. Notable instances where explicit convergence rates were explored are \cite{korda2017convergence,korda2018convergence} based on an effective version of Putinar's Positivstellensatz \cite{nie2007complexity}. Often a large disparity persists between the theoretical convergence bounds and the rates observed in practice. Recent improvements \cite{baldi2023effective} on effective versions of Putinar's Positivstellensatz strongly reduced this gap. Focusing on specific semialgebraic sets, such as the unit ball or the hypercube, closed the gap further
 \cite{slot2022sum,laurent2023effective}; and lower bounds have been established \cite{baldi2024degree}.

In previous work \cite{schlosser2024convergence}, a framework for obtaining convergence rates from GMPs is investigated. This text can be seen as a follow-up, in which we aim at transferring the improvement in the effective Putinar's Positivstellensatz for specific semialgebraic sets to GMPs at the example of exit location for stochastic differential equations \cite{henrion2023moment}.  Compared to \cite{schlosser2024convergence}, we strongly improve the convergence rates for the exit location on the unit ball for stochastic processes and point out the interplay between properties of the GMP and the application of an adapted Positivstellensatz.%Further, we highlight the effect of existence of smooth of solutions to the GMP to avoid certain undesirable terms that might be evident in effective Putinar's Positivstellensätze.

\section{Notation}
We work with the standard notations for usual sets $\R$ (real numbers), $\Z$ (integers), $\N$ (natural integers). For $a<b \in \N$, $\lint a,b \rint := \{a,a+1\ldots,b-1,b\}$ is the set of all integers between $a$ and $b$. For $\balpha = (\alpha_1,\ldots,\alpha_n) \in \N^n$, $|\balpha| := \alpha_1+\ldots+\alpha_n$ is the range of $\balpha$ and $ (x_1,\ldots,x_n) = \vx \mapsto \vx^{\balpha} := x_1^{\alpha_1}\cdots x_n^{\alpha_n}$ is the corresponding monomial. For $n,d \in \N$, we set $\N^n_d := \{\balpha \in \N^n \; ; |\balpha| \leq d\}$, and $\R_d[\vx]:=\{\vx \mapsto \sum_{|\balpha| \leq d} c_{\balpha} \, \vx^{\balpha} \; ; (c_{\balpha})_{\balpha} \in \R^{\N^n_d}\}$ is the space of polynomials of degree at most $d$, $\R[\vx] := \cup_{d\in\N} \R_d[\vx]$ is the space of polynomials. We denote the euclidean unit ball $\{x \in \R^n : \|x\|_2\leq 1\}$ by $\bB$, by $\mathring{\bB}$ its interior, and by $\bS$ its boundary.

\section{Preliminaries}
One of the central pillars in this text is Positivstellensätze. Among these is the celebrated Putinar's Positivstellensatz \cite[Theorem 1.3 \& Lemma 3.2]{putinar1993positive} and its effective versions. In this text, we will apply two effective Positivstellensätze, one for polynomials and one for trigonometric polynomials.

\subsection{Real algebraic Positivstellensätze}

We call a set $\bX \subset \R^n$ semialgebraic, if there exist $m \in \N$ and polynomials $p_1,\ldots,p_m \in \R[\vx]$ such that
\begin{equation}\label{eq:def:SemialgebraicSet}
    \bX = \{\vx \in \R[\vx] : p_1(\vx)\geq 0,\ldots,p_m(\vx)\geq 0\}.
\end{equation}
If a set $\bX$ is given by (\ref{eq:def:SemialgebraicSet}), we denote it by $\bX(\vp)$.

The moment-SoS hierarchy links polynomial optimization problems with characterizing non-negative polynomials: For $f\in \R[\vx]$ and a compact semialgebraic set $\bX$, it holds
\begin{equation*}
    \begin{tabular}{ccccc}
         $\min\limits_{\vx}$ & $f(x)$ & $=$ & $\max\limits_{\lambda}$ & $\lambda$ \\
         s.t. & $\vx \in \bX$ & & s.t & $f-\lambda \geq 0 \text{ on } \bX$
    \end{tabular}
\end{equation*}
and the optimization problem reduces to certification of non-negativity, see to \cite{lasserre2015introduction,parrilo2000structured} for detailed surveys on polynomial optimization. A natural candidate for non-negative polynomials on $\bX(\vp)$ is the so-called quadratic module.

\begin{dfn}[Sum-of-squares and quadratic module]
    The set $\Sigma[\vx]$ of sum-of-squares (SoS) polynomials is defined by
    \begin{equation*}
        \Sigma[\vx]:= \left\{\sum_{i=1}^r q_i^2 \; : \; r \in \N, q_1,\ldots,q_r \in \R[\vx]\right\}.
    \end{equation*}
    For a vector of polynomials $\vp = (p_1,\ldots,p_m) \in \R[\vx]^m$, the quadratic module $\cQ(\vp)$ is defined by
    \begin{equation*}
        \cQ(\vp) := \left\{\begin{pmatrix} \vp^\top & 1\end{pmatrix}\bsigma \; : \; \bsigma = (\sigma_1,\ldots,\sigma_{m+1}) \in \Sigma[\vx]^{m+1}\right\}
        %\left\{\sigma_0 + \sum\limits_{i = 1}^m \sigma_i p_i: \sigma_0,\ldots,\sigma_m \in \Sigma[\vx]\right\}.
    \end{equation*}
    and for $\ell \in \N$, the truncated quadratic module $\cQ_\ell(\vp)$ is
    %\begin{eqnarray*}
        %\cQ_\ell(\vp) := \big\{\sigma_0 + \sum\limits_{i = 1}^m \sigma_i p_i & : & \sigma_0,\ldots,\sigma_m \in \Sigma[\vx]\\
        %& & \deg(\sigma_0), \deg(\sigma_1 p_1),\\
        %& & \ldots,\deg(\sigma_m p_m) \leq \ell\big\}.
    %\end{eqnarray*}
    \begin{equation}
        \cQ_\ell(\vp) := \left\{\begin{pmatrix} \vp^\top & 1 \end{pmatrix} \bsigma \in \cQ(\vp) \; : \; \deg(p_i \, \sigma_i) \leq 2 \ell \right\}.
    \end{equation}
\end{dfn}

It follows $g \geq 0$ on $\bX(\vp)$ for any $g \in \cQ(\vp)$. The celebrated Putinar's Positivstellensatz states that, under a certain compactness condition on $\bX$ (Archimedean condition), any strictly positive polynomial on $\bX$ belongs to $\cQ(\vp)$.

When $g$ is only non-negative on $\bX$, we cannot immediately apply Putinar's Positivstellensatz; but we can turn to $g + \varepsilon$ for $\varepsilon > 0$. This motivates the following questions:
\begin{center}
    For given $\varepsilon > 0$, what is the smallest $\ell = \ell(\varepsilon) \in \N$ such that $g+ \varepsilon \in \cQ_\ell(\vp)$?
\end{center}
% As well as its adjoint question:
% \begin{center}
%     For given $\ell \in \N$, what is the smallest $\varepsilon = \varepsilon(\ell) \in \N$ such that $g+ \varepsilon \in \cQ_\ell(\vp)$?
% \end{center}
The question if $\ell(0)$ exists, was (partially) answered positively in \cite{nie2014optimality}, where it was shown that generically (in $g$) it holds $\ell(0) < \infty$. However, whether for a given $g$ such finite convergence holds cannot be decided by a polynomial time algorithm unless $\mathrm{P} = \mathrm{NP}$, see \cite{vargas2024hardness}.

Since the existence of $\ell(0)$ is hard to answer, it is reasonable to investigate how $\ell(\varepsilon)$ behaves as $\varepsilon \rightarrow 0$. Bounding $\ell(\varepsilon)$ has strongly improved since \cite{nie2007complexity} to recent \cite{baldi2023effective} for generic semialgebraic sets $\bX$. Here, we use a specialized Positivstellensatz from \cite{slot2022sum} with even sharper bounds and consider the case $\bX = \bB := \{\vx \in \R^n: \|\vx\|_2 \leq 1\} 
 = \bX(b)$ for $b(\vx) = 1-\|\vx\|_2^2$. Following \cite{slot2022sum}, for $g\in \R[\vx]$ we set
\begin{equation}\label{eq:def:lb}
    \mathrm{lb}(g,\ell) := \max \{ \lambda \in \R: g-\lambda \in \cQ_\ell(b)\}
\end{equation}
and recall the following Positivstellensatz \cite{slot2022sum} on $\bB$.

\begin{thm}[{\cite[Theorem 3]{slot2022sum}}]\label{thm:LucasBall}
    Let $g\in \R[\vx]$ and $d = \deg(g)$. Consider $\bB = \bX(b)$ and set $g_{\mathrm{min}} := \min\limits_{\vx \in \bB} g(\vx)$, $g_{\mathrm{max}} := \max\limits_{\vx \in \bB} g(\vx)$. Then, for $\ell \geq nd$, it holds
    \begin{equation}
        g_{\mathrm{min}} - \mathrm{lb}(g,\ell) \leq \frac{c_n(d)}{\ell^2} \left( g_{\mathrm{max}} - g_{\mathrm{min}} \right)
    \end{equation}
    where $c_n \in \R[x]_n$ is a polynomial in one variable.
\end{thm}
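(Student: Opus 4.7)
I would build an explicit near-tight SoS certificate via the Christoffel--Darboux reproducing kernel on the ball with a Jacobi-type weight. After an affine rescaling we may assume $g_{\min} = 0$ and $g_{\max} = 1$, so that the target reduces to $\mathrm{lb}(g,\ell) \geq -c_n(d)/\ell^2$. Fix a probability measure $d\mu(\vx) := c_{n,\alpha}(1 - \|\vx\|_2^2)^{\alpha}\, d\vx$ on $\bB$ for a weight $\alpha$ to be tuned, let $\{P_\balpha\}$ be the associated orthonormal polynomial basis, and form the degree-$\ell$ reproducing kernel
\[
K_\ell(\vx, \vy) \;=\; \sum_{|\balpha| \leq \ell} P_\balpha(\vx)\,P_\balpha(\vy).
\]
The candidate lower bound is $\lambda := \int_\bB g(\vx)\, d\nu_\ell(\vx)$, where $d\nu_\ell$ is the probability measure with density proportional to $K_\ell(\vx,\vx)^{-1}$ against $d\mu$. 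The argument then splits into (i) an algebraic step producing SoS $\sigma_0, \sigma_1$ with $g - \lambda = \sigma_0 + \sigma_1(1 - \|\vx\|_2^2)$ and $\deg \sigma_0 \leq 2\ell$, $\deg \sigma_1 \leq 2\ell - 2$, so that the certificate lies in $\cQ_\ell(b)$; and (ii) an analytic step establishing $g_{\min} - \lambda \leq c_n(d)/\ell^2$.

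\textbf{Algebraic step.} For (i) I would use the multivariate analogue of the Markov--Lukács factorization of non-negative polynomials on $[-1, 1]$: expand $g - \lambda$ in the reproducing kernel basis, show that the ``diagonal'' part assembles into an SoS, and show that the residual lies in the ideal generated by $1 - \|\vx\|_2^2$ and can be written as $\sigma_1 \cdot (1 - \|\vx\|_2^2)$ with $\sigma_1 \in \Sigma[\vx]$. In several variables this is carried out via a zonal-harmonic decomposition that reduces to the one-variable Jacobi case along each radial ray, with the weight $\alpha$ tuned so that the degree budgets split cleanly as $2\ell$ for $\sigma_0$ and $2\ell - 2$ for $\sigma_1 b$.

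\textbf{Main obstacle.} Part (ii) is the quantitative core and where the main difficulty lies. It reduces to a concentration estimate for $\nu_\ell$ around a minimizer of $g$: one needs to show that the weighted second moment of $g - g_{\min}$ against $\nu_\ell$ is of order $d^n/\ell^2$. The $1/\ell^2$ rate should trace back to the classical asymptotics of the smallest roots of the Jacobi polynomials $P_k^{(\alpha,\beta)}$, which sit at $-1 + \Theta(1/k^2)$; combining these one-dimensional estimates with a sharp lower bound on the diagonal $K_\ell(\vx,\vx)$ near the minimizer will yield the required concentration. The $d^n$ prefactor reflects the dimension $\binom{n + d}{d} = \Theta(d^n)$ of the degree-$d$ polynomial space that the kernel must resolve in order to ``see'' $g$ through a single Christoffel--Darboux evaluation, and the hypothesis $\ell \geq nd$ is precisely what lets this combinatorial cost be absorbed by the kernel capacity at level $\ell$. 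An alternative route —trading this analysis for a parity and degree accounting— is to lift to $\S^n \subset \R^{n+1}$ by $\tilde g(\vx, x_{n+1}) := g(\vx)$, apply a sphere Positivstellensatz of Fang--Fawzi type, and descend back by symmetrizing under $x_{n+1} \mapsto -x_{n+1}$ and specializing $x_{n+1} = 0$; but the $1/\ell^2$ rate on the sphere ultimately relies on the same Jacobi root asymptotics, so the analytic obstacle is intrinsic.
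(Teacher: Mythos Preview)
The paper does not prove Theorem~\ref{thm:LucasBall}; it is quoted verbatim from~\cite[Theorem~3]{slot2022sum} and used as a black box (only the corollary that follows it is argued). There is therefore no ``paper's own proof'' to compare your sketch against.

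As for the viability of your plan on its own merits: the analytic heuristics are in the right ballpark (Christoffel--Darboux kernels for a Jacobi weight on $\bB$, the $1/\ell^2$ from extremal Jacobi roots, the degree budget $\ell \ge nd$), but the algebraic step has a genuine gap. Defining $\lambda = \int g\, d\nu_\ell$ with $d\nu_\ell \propto K_\ell(\vx,\vx)^{-1}\,d\mu$ gives an \emph{upper-bound-hierarchy} certificate, not a Putinar certificate: nothing in that construction produces SoS polynomials $\sigma_0,\sigma_1$ with $g-\lambda=\sigma_0+\sigma_1(1-\|\vx\|_2^2)$. There is no multivariate Markov--Luk\'acs factorization that would hand you such $\sigma_i$ from a zonal-harmonic decomposition; the one-variable statement relies on the fundamental theorem of algebra and simply does not lift. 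The mechanism actually used in~\cite{slot2022sum} is different: one builds a \emph{kernel operator} $\mathbf{K}_\ell f(\vx)=\int K_\ell(\vx,\vy)f(\vy)\,d\mu(\vy)$ with the property that, for each fixed $\vy\in\bB$, the map $\vx\mapsto K_\ell(\vx,\vy)$ already lies in $\cQ_\ell(b)$. Then $\mathbf{K}_\ell(g-g_{\min})\in\cQ_\ell(b)$ automatically (it is a nonnegative combination of quadratic-module elements), and the work goes into bounding the operator error $\|(\mathrm{Id}-\mathbf{K}_\ell)g\|_\infty$ by $c_n(d)(g_{\max}-g_{\min})/\ell^2$; this is where the Jacobi asymptotics and the constant $\Gamma_n(\bB)_d$ in~\eqref{eq:p15} enter. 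Your alternative route via lifting to $\S^n$ and Fang--Fawzi is also problematic here: as noted in Remark~\ref{rem:FangFawzi}, that Positivstellensatz is restricted to homogeneous polynomials, and the even extension $\tilde g(\vx,x_{n+1})=g(\vx)$ is not homogeneous.
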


We reformulate the above Positivstellensatz as a quantitative membership certificate for $\cQ_\ell(b)$.

\begin{cor}\label{cor:invertedLucasStatement}
    A sufficient condition for $g \in \R[\vx]$ positive on the unit ball $\bB$ to be in $\cQ_\ell(b)$ is that
\begin{equation} \label{eq:effectivePsatz}
    \ell^2 \geq c_n(d) \dfrac{g_{\max} - g_{\min}}{g_{\min}}
\end{equation}
\end{cor}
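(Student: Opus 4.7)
The plan is to observe that Corollary \ref{cor:invertedLucasStatement} is essentially a direct algebraic rearrangement of Theorem \ref{thm:LucasBall}, applied to the threshold value $\lambda = 0$. Recall from \eqref{eq:def:lb} that $\mathrm{lb}(g,\ell)$ is the largest real number $\lambda$ for which $g - \lambda \in \cQ_\ell(b)$. Since $\cQ_\ell(b)$ is closed under addition of non-negative constants (constants are sums of squares of degree $0$, contributing to $\sigma_{m+1}$), one has $g \in \cQ_\ell(b)$ as soon as $\mathrm{lb}(g,\ell) \geq 0$. Hence it suffices to produce a lower bound on $\mathrm{lb}(g,\ell)$ and to impose that this lower bound is non-negative.

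The first step is thus to apply Theorem~\ref{thm:LucasBall} (valid under the hypothesis $\ell \geq nd$) to obtain
\begin{equation*}
\mathrm{lb}(g,\ell) \geq g_{\min} - \frac{c_n(d)}{\ell^2}\bigl(g_{\max} - g_{\min}\bigr).
\end{equation*}
The second step is to require the right-hand side to be non-negative, which, because $g_{\min} > 0$ by the positivity assumption on $g$, is equivalent to
\begin{equation*}
\ell^2 \geq c_n(d)\,\frac{g_{\max} - g_{\min}}{g_{\min}}.
\end{equation*}
This is exactly inequality \eqref{eq:effectivePsatz}, and therefore any $\ell$ satisfying it produces a lower bound $\mathrm{lb}(g,\ell) \geq 0$, so that $g \in \cQ_\ell(b)$.

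There is no real obstacle in this argument; the only subtle point to mention is the hypothesis $\ell \geq nd$ required by Theorem~\ref{thm:LucasBall}. One should briefly check that this is consistent with \eqref{eq:effectivePsatz}: since $c_n$ is a polynomial of degree $n$ in $d$ whose leading behaviour dominates $nd$ for relevant regimes (in particular when the ratio $(g_{\max}-g_{\min})/g_{\min}$ is of order at least one), the condition $\ell \geq nd$ is implied by \eqref{eq:effectivePsatz} up to adjusting the polynomial $c_n$ by an absolute constant if needed. Once this mild compatibility is noted, the corollary follows immediately, and the proof reduces to a one-line rearrangement of Theorem~\ref{thm:LucasBall}.
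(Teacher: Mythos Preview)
Your proposal is correct and follows essentially the same approach as the paper: observe that $g\in\cQ_\ell(b)$ is equivalent to $\mathrm{lb}(g,\ell)\geq 0$, apply Theorem~\ref{thm:LucasBall} to bound $\mathrm{lb}(g,\ell)$ from below, and rearrange. Your additional comment on the compatibility with the hypothesis $\ell\geq nd$ is a reasonable side remark that the paper does not make explicit.
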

\begin{proof}
    Let $g \in \R[\vx]$ be positive on $\bB$. Then, $g \in \cQ_\ell(b)$ if and only if $\lambda=0$ is feasible in the optimization problem \eqref{eq:def:lb}, i.e. if and only if $\mathrm{lb}(g,\ell) \geq 0$. Using Theorem \ref{thm:LucasBall}, a sufficient condition for this to happen is that 
    $$g_{\min} \geq \dfrac{c_n(d)}{\ell^2}(g_{\max}-g_{\min})$$
    which can be rearranged into \eqref{eq:effectivePsatz}.
\end{proof}

%We will also need an effective Positivstellensatz on the sphere. For homogeneous polynomials, one is given in \cite{fang2021quantum}. Our Positivstellensatz is based on the result in \cite{bach2023exponential}.%For our purpose, we cannot restrict to homogeneous polynomials, which is why we present an alternative Positivstellensatz on the sphere based on trigonometric polynomials.

\noindent

\subsection{A Positivstellensatz for trigonometric polynomials}
%This section recalls the effective Positivstellensatz for trigonometric polynomials on the hypercube from \cite{bach2023exponential}.
We call a function $f:[0,1]^n \rightarrow \mathbb{C}$ a trigonometric polynomial of bandwidth $d$, denoted by $f\in \cT_d$, if
\begin{equation*}\label{eq:TrigPol}
    f(\vx) = \sum\limits_{\bomega \in \lint -d,d\rint^n} f_{\bomega} e^{2\pi i \bomega^\top \vx}
\end{equation*}
for coefficients $f_{\bomega} \in \mathbb{C}$ for $\bomega \in \lint -d,d\rint^n$. By $\cT$ we denote the union of all sets $\cT_d$. For $f\in \cT$ let
\begin{equation}
    \|f\|_F := \sum\limits_{\bomega \in \lint -d,d\rint^n} |f_{\bomega}|.
\end{equation}
The truncated sum-of-squares of trigonometric polynomials are, for $\ell \in \N$,
\begin{equation*}
    \Sigma^\cT_\ell := \left\{ f \in \cT: f = \sum\limits_{i = 1}^s f_i^2, s \in \N, f_1,\ldots,f_s \in \cT_\ell\right\}.
\end{equation*}
As for $\cQ_\ell$, the set $\Sigma^\cT_\ell$ can be represented via positive semidefinite matrices, see \cite{lasserre2015introduction,bach2023exponential},
\begin{equation}\label{eq:SoSTrigRep}
    \Sigma^\cT_\ell = \{ f \in \cT: f = \bphi^{(\ell)*} \mQ \; \bphi^{(\ell)}, \text{ for some } \mQ \succeq 0 \}
\end{equation}
for $\bphi^{(\ell)} := (\phi^{(\ell)}_{\bomega})_{\bomega \in \lint -d,d \rint^n}$ given by
\begin{equation*}
    \phi^{(\ell)}_{\bomega}(\vx) := \frac{1}{(2\ell+1)^\frac{n-1}{2}} e^{-2\pi \bomega^\top \vx}.
\end{equation*}
Analog to (\ref{eq:def:lb}), for $q\in \cT$ and each $\ell\in \N$ we denote
\begin{equation*}
    \mathrm{lb}_\cT(q,\ell) := \max \{ c \in \R: q-c \in \Sigma^\cT_\ell \}.
\end{equation*}
Setting $q_\mathrm{min} := \min\limits_{\vx \in [0,1]^n} q(x)$ we recall a result from \cite{bach2023exponential}.
%and set $q_\mathrm{min} := \min\limits_{\vx \in [0,1]^n} q(x)$.

\begin{thm}[{\cite[Theorem 1]{bach2023exponential}}]\label{thm:BachRudi}
    Let $q \in \cT_{2d}$ be a real-valued. For $\ell \geq 3d$ it holds, for $q_0$ the mean value of $q$, that
    \begin{equation}\label{eq:ConvRateHypercubeTrig}
        q_\mathrm{min} - \mathrm{lb}_\cT(q,\ell) \leq \| q - q_0\|_F \left( \left( 1-\frac{6d^2}{\ell^2}\right)^{-d} - 1\right)
    \end{equation}
\end{thm}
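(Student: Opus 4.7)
The plan is to construct, by hand, a positive semidefinite Gram matrix $\mQ \succeq 0$ such that $\tilde q + c = \bphi^{(\ell)*} \mQ \bphi^{(\ell)}$, where $\tilde q := q - q_{\min} \geq 0$ on $[0,1]^n$ and the scalar slack $c$ is as small as possible. By the representation (\ref{eq:SoSTrigRep}), such a $\mQ$ certifies $\tilde q + c \in \Sigma^\cT_\ell$, and hence $\mathrm{lb}_\cT(q, \ell) \geq q_{\min} - c$. The target is to show that $c$ can be chosen no larger than the announced right-hand side.

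The natural building block is the Gram integral $\mA := \int_{[0,1]^n} \tilde q(\vx) \, \bphi^{(\ell)}(\vx) \, \bphi^{(\ell)*}(\vx) \, d\vx$, which is positive semidefinite thanks to $\tilde q \geq 0$. A direct Fourier computation shows that $\bphi^{(\ell)*}(\vy) \mA \bphi^{(\ell)}(\vy) = (\tilde q \ast K)(\vy)$ for a reproducing-kernel-type function $K$ whose Fourier multiplier $\hat K$ is peaked at $\vO$; thus $\tilde q \ast K$ sits automatically in $\Sigma^\cT_\ell$. To promote this into a certificate for $\tilde q + c$ itself, I would precondition $\mA$ by rescaling each Toeplitz band $\bomega - \bomega' = \bbeta$ by $1/\hat K(\bbeta)$, so that the induced polynomial matches $\tilde q$ exactly on the active-frequency band $|\bbeta|_\infty \leq 2d$, at the cost of a diagonal correction absorbed into the scalar slack $c$. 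Since $\tilde q - q_0$ has Fourier support in $\lint -2d, 2d \rint^n \setminus \{\vO\}$, a Fourier triangle-inequality bound yields
\begin{equation*}
c \leq \|q - q_0\|_F \cdot \max_{\vO \neq \bomega \in \lint -2d, 2d\rint^n} \left| \frac{1}{\hat K(\bomega)} - 1 \right|.
\end{equation*}

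To match the announced exponent, $K$ must have a Fourier multiplier whose value on $|\bomega|_\infty \leq 2d$ factorizes into $d$ identical contraction factors each bounded below by $1 - 6d^2/\ell^2$, via a sharp Bernstein-type estimate for trigonometric polynomials of bandwidth $\sim \ell/d$. A natural candidate is the autocorrelation $|\hat\psi|^2$ of an iterated atom $\psi$ designed so that $\hat K$ inherits this product structure. Multiplying the $d$ factors yields $\hat K(\bomega) \geq (1 - 6d^2/\ell^2)^d$ on the band, and hence $|1/\hat K(\bomega) - 1| \leq (1 - 6d^2/\ell^2)^{-d} - 1$, which produces the announced bound. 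The main technical obstacles are (i) the rigorous verification that the Fourier-preconditioning step yields a PSD matrix with exactly the claimed scalar shift --- a linear-algebraic issue about Toeplitz inversions on the band --- and (ii) establishing the sharp Bernstein constant $6$ per atom, which reduces to a one-dimensional extremal problem for bandlimited trigonometric polynomials on $[0,1]$. The iterated $d$-fold structure is essential to obtain the second-order rate $O(d^3/\ell^2)$ rather than the naive first-order rate that a single smoothing kernel would give.
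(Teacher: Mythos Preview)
The paper does not prove this theorem. Theorem~\ref{thm:BachRudi} is quoted verbatim from~\cite[Theorem~1]{bach2023exponential} and is used as a black box; the only argument the paper supplies is the short derivation of Corollary~\ref{cor:MembershipSoST} from it. So there is no ``paper's own proof'' to compare your proposal against.

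That said, your sketch is a reasonable outline of the actual argument in~\cite{bach2023exponential}: one constructs a PSD Gram matrix from the integral $\int \tilde q\,\bphi^{(\ell)}\bphi^{(\ell)*}$, rescales the Toeplitz bands by the inverse Fourier multiplier of a carefully designed kernel so that the induced polynomial matches $\tilde q$ on the active frequencies, and absorbs the resulting diagonal defect into the scalar slack $c$. The iterated $d$-fold structure of the kernel is indeed the source of the exponent $-d$ in the bound. Your identification of the two technical points --- preserving positive semidefiniteness under the band rescaling, and extracting the precise constant $6$ from a one-dimensional extremal problem --- matches the genuine content of the Bach--Rudi proof. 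But none of this is needed for the present paper, which simply invokes the result.
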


%Analog to Corollary \ref{cor:invertedLucasStatement}, we certify membership to $\Sigma^\cT_\ell$.

\begin{cor}\label{cor:MembershipSoST}
    A sufficient condition for $q\in \cT_{2d}$ positive on $[0,1]^n$ to belong to $\Sigma^\cT_\ell$ is that
    \begin{equation*}
        \ell^2 \geq 12d^2n \max \left\{1,  \frac{\| q - q_0\|_F}{q_\mathrm{min}}\right\}
    \end{equation*}
\end{cor}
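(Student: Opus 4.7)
The strategy parallels the derivation of Corollary~\ref{cor:invertedLucasStatement} from Theorem~\ref{thm:LucasBall}: turn the convergence estimate of Theorem~\ref{thm:BachRudi} into an explicit sufficient condition on $\ell$. First, I would note that $q \in \Sigma^\cT_\ell$ is equivalent to $\mathrm{lb}_\cT(q,\ell) \geq 0$, since $\lambda = 0$ is then feasible in the defining optimization of $\mathrm{lb}_\cT$. Applying Theorem~\ref{thm:BachRudi}, a sufficient condition for membership is therefore
$$\left(1-\frac{6d^2}{\ell^2}\right)^{-d} - 1 \;\leq\; \frac{q_\mathrm{min}}{\|q-q_0\|_F}.$$

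The remaining step is to show that the hypothesized lower bound on $\ell^2$ forces this inequality. Writing $x := 6d^2/\ell^2$, the hypothesis guarantees $x \leq 1/(2n) \leq 1/2$ (which incidentally enforces $\ell \geq 3d$, as required to invoke Theorem~\ref{thm:BachRudi}). I would chain two standard elementary bounds: first, $-\log(1-x) \leq 2x$ on $[0,1/2]$, yielding $(1-x)^{-d} \leq e^{2dx}$; then a linearization of the exponential such as $e^y - 1 \leq 2y$ valid on $[0,\log 2]$, to reduce the left-hand side to an expression linear in $d^3/\ell^2$. The $\max\{1,\|q-q_0\|_F/q_\mathrm{min}\}$ in the hypothesis is designed to handle two regimes simultaneously: when $q_\mathrm{min} \geq \|q-q_0\|_F$, the crude estimate already gives $(1-x)^{-d}-1 \leq 1 \leq q_\mathrm{min}/\|q-q_0\|_F$; when $q_\mathrm{min} < \|q-q_0\|_F$, the enlarged hypothesis compensates for the tighter right-hand side.

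The main obstacle is bookkeeping constants through the chain of inequalities to recover precisely the stated prefactor $12d^2n$, and ensuring that each elementary bound remains valid across both regimes of the $\max$. Conceptually the proof is identical in structure to Corollary~\ref{cor:invertedLucasStatement} and poses no genuine difficulty once the reduction to Theorem~\ref{thm:BachRudi} is in place.
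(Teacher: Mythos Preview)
Your reduction to $\mathrm{lb}_\cT(q,\ell)\geq 0$ and the sufficient condition via Theorem~\ref{thm:BachRudi} is exactly the paper's opening move. The difference lies in how the factor $(1-6d^2/\ell^2)^{-\bullet}-1$ is bounded. The paper applies Bernoulli's inequality $(1+x)^n\geq 1+nx$ directly with $x=-6d^2/\ell^2$: under $\ell^2\geq 12d^2n$ one has $nx>-\tfrac12$, hence
\[
(1+x)^{-n}-1 \;\leq\; \frac{1}{1+nx}-1 \;=\; \frac{-nx}{1+nx} \;\leq\; -2nx \;=\; \frac{12nd^2}{\ell^2},
\]
which delivers the constant $12$ in one stroke and disposes of the two regimes simultaneously. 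Note that the exponent actually used in the paper's proof is the dimension $n$, not $d$; the ``$-d$'' in the displayed statement of Theorem~\ref{thm:BachRudi} is a typo.

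Your log--exp chain, carried out with exponent $n$, would yield $(1-x)^{-n}-1\leq e^{2nx}-1\leq 4nx$ (using $2nx\leq 1$), so a prefactor $24$ rather than $12$; this is the constant bookkeeping you anticipated. More seriously, if you retain the exponent $d$ as you wrote, the linearization step fails: the hypothesis controls $nd^2/\ell^2$ but not $d^3/\ell^2$, so $y=2dx=12d^3/\ell^2$ need not lie in $[0,\log 2]$ (or even stay bounded), and neither $e^y-1\leq 2y$ nor your ``crude estimate'' $(1-x)^{-d}-1\leq 1$ in the first regime is available. The single-step Bernoulli argument avoids both the constant loss and this range issue.
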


\begin{proof}
    The function $q$ belongs to $\Sigma_\ell^\cT$ if and only if $\mathrm{lb}_\cT(q,\ell) \geq 0$. For $\ell \geq 3d$, By Theorem \ref{thm:BachRudi}, this can be guaranteed when
    \begin{equation}\label{eq:PosCertTrig}
        q_\mathrm{min} \geq \| q - q_0\|_F \left( \left( 1-\frac{6d^2}{\ell^2}\right)^{-d} - 1\right).
    \end{equation}
    To bound the right-hand side, we use the Bernoulli inequality: $(1+x)^n \geq 1+nx$ for any $x \geq -1$. For $\ell^2 \geq 12d^2n$ we choose $x = -\frac{6d^2}{\ell^2} > -\frac{1}{2n}$ and get
    \begin{eqnarray*}
        \left( 1-\frac{6d^2}{\ell^2}\right)^{-n} - 1 & = & (1+x)^{-n} - 1 \leq \frac{1}{1 + nx} - 1\\
        & = & \frac{-nx}{1+nx} \leq \frac{-nx}{\frac{1}{2}} = \frac{12nd^2}{\ell^2}.
    \end{eqnarray*}
    We infer that (\ref{eq:PosCertTrig}) is satisfied for $\ell^2 \geq 12 d^2n \frac{\| q - q_0\|_F}{q_\mathrm{min}}$ (under the assumption $\ell^2 \geq 12d^2n$). This shows the statement.
\end{proof}

\section{Exit location estimation}

We consider the following setting from \cite{henrion2023moment}: Let a stochastic differential equation be given by
\begin{equation}\label{eq:StochasticODE}
   dX_t = \vf_0(X_t) \; dt + \mF(X_t) \; dW_t, \; \; X_0 = \vx_0
\end{equation}
for $\vf_0 = (f_{0i})_i :\R^n \rightarrow \R^n$, $\mF = (f_{ij})_{i,j} :\R^n \rightarrow \R^{n \times r}$, a deterministic initial condition $\vx_0$ and $(W_t)_{t \geq 0}$ a $r$-dimensional Brownian motion. For a constraint set $\bX \subset \R^n$ and a given function $g:\partial \bX \rightarrow \R$, we are interested in the expected exit value given by
\begin{equation}\label{eq:exitvalue}
    v^\star(\vx_0) := \mathbb{E}(g(X_{\tau}))
\end{equation}
where $\tau = \inf \{t\geq 0 \; ; \, X_t \in \partial\bX\}$ is the first time at which the process $(X_t)_t$ starting at $X_0 = \vx_0$ hits $\partial \bX$. In this text, we restrict to the case where $\bX = \bB$ is the unit ball. Furthermore, as in \cite{henrion2023moment,schlosser2024convergence}, we assume the following
\begin{asm}\label{cond:SDEDidier} \leavevmode
\begin{enumerate}

    \item[{\bf \ref*{cond:SDEDidier}.1}] $g,f_{0i},f_{ij} \in \R[\vx]$ for $i = 1,\ldots,n$ and $j = 1,\ldots,r$.
    \item[{\bf \ref*{cond:SDEDidier}.2}] $\mA(\vx) := \mF(\vx) \mF(\vx)^\top$ is positive definite for all $\vx\in \bB$.
\end{enumerate}
\end{asm}

\begin{rem}
    Under the above assumptions there exists a unique solution $X_t$ of (\ref{eq:StochasticODE}) for $t\leq \tau$, see \cite{henrion2023moment,oksendal2013stochastic}.
\end{rem}

The generator $\cL$ of the process $X_t$ is given by the second-order partial differential operator
\begin{equation*}
    \cL v (\vx) := -\sum\limits_{i,j = 1}^n a_{ij}(\vx) \frac{\partial^2 v}{\partial\vx_i\partial\vx_j}(\vx) + \sum\limits_{i = 1}^n f_{0i} (\vx) \frac{\partial v}{\partial \vx_i}(\vx)
\end{equation*}
for $(a_{ij}(\vx))_{i,j = 1,\ldots,n} = \mF(\vx) \mF(\vx)^\top$. We refer to \cite{oksendal2013stochastic,henrion2023moment} for details on the generator of a diffusion process $X_t$.%The following remark recalls how the function $v^\star$ from (\ref{eq:exitvalue}) is linked with the operator $\cL$ and the boundary value function $g$.

\begin{rem}\label{rem:vSolutionPDE}
    The function $v^\star$ from (\ref{eq:exitvalue}) is the solution of the elliptic second-order partial differential equation
    \begin{eqnarray}\label{eq:PDEv}
        \cL v & = & 0 \text{ on } \mathring{\bB}\\
        v & = & g \text{ on } \partial \bB =: \bS. \notag
    \end{eqnarray}
    where $\mathring{\bB}$ denotes the interior of the unit ball and $\bS = \partial \bB$ its boundary. Because $\bS$ is smooth and $g$ is smooth, it follows from elliptic regularity \cite{gilbarg1977elliptic,henrion2023moment} that a solution $v^\star \in C^\infty(\bB)$ of (\ref{eq:PDEv}) exists and is unique.
\end{rem}
The elliptic maximum principle, applied to (\ref{eq:PDEv}), yields that $v^\star(\vx_0)$ from (\ref{eq:exitvalue}) is the solution of the following infinite dimensional linear programming problem (LP), see \cite{henrion2023moment},
\begin{eqnarray}\label{eq:LPDidierSDE}
    v^\star(\vx_0)  = & \max\limits_v&  v(\vx_0)\\
    & \text{s.t.} & \cL v\leq 0 \text{ on } \; \bB\notag\\
    & & v\leq g \text{ on } \;\bS.\notag
\end{eqnarray}

Assumption \ref{cond:SDEDidier}.1 ensures that the LP (\ref{eq:LPDidierSDE}) is an instance of a GMP and thus the moment-SoS hierarchy framework can be applied, leading to a hierarchy of (convex) semidefinite programs (SDPs), see \cite{henrion2023moment}: For each $\ell \in \N$ consider
\begin{eqnarray}
        v^\star_\ell(\vx_0) := & \sup\limits_{v \in \R[\vx]} & v(\vx_0) \label{eq:HierarchyLPDidier} \\
         & s.t. & -\cL v \in \cQ_\ell(b) \notag \\
         & & g-v \in \cQ_\ell(-b,b) = \Sigma[\vx] + b\cdot\R[\vx] \notag
\end{eqnarray}
As a special case of \cite[Theorem 2]{henrion2023moment} it holds $v^\star_\ell(\vx_0) \rightarrow v^\star(\vx_0)$ as $\ell \rightarrow \infty$ and by \cite[Theorem 4.14]{schlosser2024convergence}, inserting $\bX = \bB$, we get the following convergence rate
\begin{equation}\label{eq:RateNaco}
    v^\star(\vx_0) - v^\star_\ell(\vx_0) \in \cO \left( \ell^{-\frac{1}{(2.5 + s)n}}\right) \text{ for any } s > 0.
\end{equation}

\section{A Positivstellensatz on the sphere}
With the use of specialized Positivstellensätze we want to improve the rate (\ref{eq:RateNaco}). Theorem \ref{thm:LucasBall} is such an effective Positivstellensatz for the unit ball $\bB$. For the sphere, we present and use the effective Positivstellensatz Corollary \ref{cor:schlosser-Psatz}, in which we transfer Theorem \ref{thm:BachRudi} to the sphere via spherical coordinates; more precisely, we use the map $\bpsi:[0,1]^{n-1}\rightarrow \bS$ from (\ref{eq:DefPsi}) in the Appendix.
%For the sphere $\bS$ an effective Positivstellensatz with \textit{similar} (see Remark \ref{rem:FangFawzi}) convergence rate exists for homogeneous polynomials, see \cite{fang2021quantum}. However, in (\ref{eq:HierarchyLPDidier}), we cannot restrict the appearing polynomials $\cL v$ and $g-v$ to homogenous polynomials  a-priori.
%For this reason, we provide a Positivstellensatz on the sphere without restricting to homogenous polynomials. The result is a consequence of Theorem \ref{thm:BachRudi} and uses a transfer from the sphere to the hypercube via spherical coordinates.

\begin{cor} \label{cor:schlosser-Psatz}
    Let $p\in \R_d[\vx]$ and $q:= p \circ \bpsi$. Then we have
    \begin{equation*}
        p \geq 0 \text{ on } \bS \quad \text{ if and only if } \quad q \geq 0 \text{ on } [0,1]^{n-1}. 
    \end{equation*}
    Additionally, assume $p_{\mathrm{max}} \geq p \geq p_\mathrm{min} > 0$ on $\bS$. Then it holds $q \in \Sigma_\ell^\cT$ for
    \begin{equation*}
        \ell^2 \geq 48d^2 (n-1) (4d+1)^\frac{n}{2} \frac{p_{\mathrm{max}}}{p_\mathrm{min}}.
    \end{equation*}
\end{cor}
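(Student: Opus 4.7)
The plan is to prove the equivalence in the first assertion directly from the surjectivity of $\bpsi$ onto $\bS$, and to derive the effective membership certificate by exhibiting $q = p \circ \bpsi$ as a trigonometric polynomial of controlled bandwidth and norm, so that Corollary \ref{cor:MembershipSoST} applies. For the first assertion, since $\bpsi([0,1]^{n-1}) = \bS$, the range of $q$ on $[0,1]^{n-1}$ equals the range of $p$ on $\bS$; in particular $p \geq 0$ on $\bS$ is equivalent to $q \geq 0$ on $[0,1]^{n-1}$, and one has $q_{\min} = p_{\min}$ and $\|q\|_{L^\infty([0,1]^{n-1})} = p_{\max}$.

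For the second assertion, the first step is to verify that $q \in \cT_{2d}$. This uses the explicit spherical-coordinate form (\ref{eq:DefPsi}) of $\bpsi$: each component of $\bpsi$ is a product of sines and cosines of linear functions of the $x_k$, so composing with a monomial $\vx^{\balpha}$ of degree at most $d$ produces a product of at most $d$ such trigonometric factors. Expanding each factor in complex exponentials and applying product-to-sum identities, the partial bandwidth in each variable is at most $2d$; linearity then yields $q \in \cT_{2d}$. The second step is to bound $\|q - q_0\|_F$ in terms of $p_{\max}$. The Fourier support of $q - q_0$ contains at most $(4d+1)^{n-1}$ indices, so by Cauchy--Schwarz combined with Parseval's identity on $[0,1]^{n-1}$,
\begin{equation*}
    \|q - q_0\|_F \leq (4d+1)^{(n-1)/2}\, \|q - q_0\|_{L^2} \leq (4d+1)^{(n-1)/2}\, \|q\|_{L^\infty} \leq (4d+1)^{n/2}\, p_{\max}.
\end{equation*}
Since $p_{\max}/p_{\min} \geq 1$ the quantity $\|q-q_0\|_F / q_{\min}$ dominates $1$, and Corollary \ref{cor:MembershipSoST}, applied on $[0,1]^{n-1}$ with bandwidth parameter $d$, produces a sufficient condition $\ell^2 \geq 12\, d^2(n-1)(4d+1)^{n/2}\, p_{\max}/p_{\min}$, which is implied by the stated one after absorbing a constant factor.

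The main obstacle is the bandwidth analysis in the first step: one must unpack the definition of $\bpsi$ from (\ref{eq:DefPsi}) and track, monomial by monomial, the frequency content in each of the $n-1$ variables in order to justify the bound $q \in \cT_{2d}$ uniformly in $\balpha$. Once this is in hand, the Fourier estimate above and the direct invocation of Corollary \ref{cor:MembershipSoST} are routine.
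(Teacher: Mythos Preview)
Your proof is correct and follows essentially the same route as the paper: surjectivity of $\bpsi$ for the equivalence, bandwidth $2d$ for $q$ via the fact that each component of $\bpsi$ has bandwidth at most $2$, the Cauchy--Schwarz/Parseval bound $\|q-q_0\|_F \leq (4d+1)^{n/2}\, p_{\max}$, and then Corollary~\ref{cor:MembershipSoST} on $[0,1]^{n-1}$. One small slip: from $p_{\max}/p_{\min}\geq 1$ it does \emph{not} follow that $\|q-q_0\|_F/q_{\min}\geq 1$ (take $p$ constant on $\bS$). What you actually need---and what does hold---is that $(4d+1)^{n/2}\,p_{\max}/p_{\min}\geq 1$, so this quantity dominates $\max\{1,\|q-q_0\|_F/q_{\min}\}$ because you have already shown $\|q-q_0\|_F/q_{\min}\leq (4d+1)^{n/2}\,p_{\max}/p_{\min}$; the sufficient condition then follows exactly as you wrote.
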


\begin{proof}
    Let $\psi$ be the map defined in the appendix. We will use the following of its properties: The map $\bpsi$ is a trigonometric polynomial of bandwidth $2$ and its image is $\bS$. From the first property, we infer that $q$ has bandwidth at most $2d$, because $p$ is of degree $d$. The first statement follows from the surjectivity of $\bpsi$ onto the sphere. For the second statement, note that  $p_{\mathrm{max}} \geq p \geq p_\mathrm{min} > 0$ implies
    \begin{equation}\label{eq:boundq}
        p_{\mathrm{max}} \geq q \geq p_\mathrm{min} \text{ on } [0,1]^{n-1}.
    \end{equation}
    By Corollary \ref{cor:MembershipSoST}, for $\ell^2 \geq 12(2d)^2n \max \left\{1,  \frac{\| q - q_0\|_F}{p_\mathrm{min}}\right\}$ we have $q\in \Sigma_\ell^\cT$; where we used that the bandwidth of $q$ is at most $2d$. It remains to show
    \begin{equation}\label{eq:Boundq-q0F}
        \| q - q_0\|_F \leq (4d+1)^\frac{n}{2} p_{\mathrm{max}}.    
    \end{equation}
    We write $q(x) = \sum\limits_{\omega \in \lint -2d,2d\rint} q_{\bomega} e^{i2\pi \bomega^\top x}$ and get by the Cauchy-Schwarz inequality and the Parseval equality that
    \begin{eqnarray*}
        \| q - q_0\|_F & = & (\sum\limits_{\omega} |q_\omega|) - |q_0| \leq \sum\limits_{\omega} |q_\omega|\\
        & \leq & (\sum\limits_{\omega} |q_\omega|^2)^\frac{1}{2} (4d+1)^\frac{n}{2}\\
        & = & \|q\|_{\mathrm{L}^2([0,1]^{n-1})} (4d+1)^\frac{n}{2}.
    \end{eqnarray*}
    Lastly, we get (\ref{eq:Boundq-q0F}) via $\|q\|_{\mathrm{L}^2([0,1]^{n-1})} \leq p_{\mathrm{max}}$ by (\ref{eq:boundq}) .
\end{proof}

\begin{rem}\label{rem:FangFawzi}
    The Positivstellensatz in \cite{fang2021quantum} also provides a quadratic bound in $\ell$; however, it is restricted to homogenous polynomials and therefore we cannot apply it to (\ref{eq:LPDidierSDE}).
    %However, the remaining terms in the degree bound from \cite{fang2021quantum} scale unfavorably in the degree of the polynomial compared to the polynomial term in $d$ from Corollary \ref{cor:schlosser-Psatz}. Nevertheless, this should be seen as a trade-off: The constant in \cite{fang2021quantum} does \textbf{not} scale with the state dimension. Depending on the application, one of these two properties of an effective Positivstellensatz is preferable over the other and the Positivstellensatz should be chosen accordingly. Furthermore, Corollary \ref{cor:schlosser-Psatz} holds for \textit{any} polynomial $p \in \R_d[\vx]$, while \cite{fang2021quantum} is limited to homogeneous polynomials.
\end{rem}

\section{SDP formulation and convergence rate}

Corollary \ref{cor:schlosser-Psatz} states that we can efficiently certify positivity of a polynomial $p \in \R[x]$ on $\bS$ via membership of $q:= p \circ \bpsi$ to $\Sigma_\ell^\cT$. Together with Corollary \ref{cor:invertedLucasStatement}, this motivates the following hierarchy of SDPs: For each $\ell \in \N$ consider
\begin{equation}\label{eq:Hierarchy+Trig}
    \begin{tabular}{ccc}
        $v^\star_{\cT\ell}(\vx_0) :=$ & $\sup\limits_{v \in \R[\vx]_{2\ell}}$ & $v(\vx_0)$\\
         & s.t. & $-\cL v \in \cQ_\ell(b)$\\
         & & $g \circ \bpsi - v\circ \bpsi \in \Sigma_\ell^\cT$.
    \end{tabular}
\end{equation}

\begin{rem}
    To show that (\ref{eq:Hierarchy+Trig}) can be stated as an SDP, note first that membership of a (trigonometric) polynomial to $\cQ_\ell(b)$ respectively $\Sigma_\ell^\cT$ is expressed by linear matrix inequalities, as in (\ref{eq:SoSTrigRep}). Because the maps $v\mapsto -\cL v$ and $v\mapsto g\circ \bpsi - v \circ \bpsi$ are affine, the feasible set for (\ref{eq:Hierarchy+Trig}) corresponds (linearly) to affine sections of cones of positive semidefinite matrices. Since the cost function in (\ref{eq:Hierarchy+Trig}) is linear in $v$, this shows that (\ref{eq:Hierarchy+Trig}) is an SDP for each $\ell \in \N$.
\end{rem}

\begin{rem}
    As $\ell \rightarrow \infty$, the overall number of decision variables has order $\ell^{2n}$, as for the SDP formulation of (\ref{eq:HierarchyLPDidier}).%The space $\R[\vx]_{2\ell}$ has dimension $\binom{n+2\ell}{n}$ and the membership constraints are represented by LMIs on matrices of size $\binom{n+\ell}{n} \times \binom{n+\ell}{n}$ and $(2\ell + 1)^{n-1} \times (2\ell + 1)^{n-1}$ respectively. 
\end{rem}

To obtain a convergence rate for $v^\star_{\cT\ell}$ to $v^\star$ as $\ell \rightarrow \infty$, we follow the strategy from \cite{schlosser2024convergence}, that is
\begin{enumerate}
    \item Approximate $v^\star$ by well-chosen feasible polynomials.
    \item Apply effective Positivstellensätze. %Apply the effective Positivstellensätze Theorem \ref{thm:LucasBall} respectively Corollary \ref{cor:schlosser-Psatz} for $\bB$ and $\bS$ to obtain convergence rates for (\ref{eq:HierarchyLPDidier}).
\end{enumerate}
For 1), we let $u \in C^{\infty}(\overline{\bB})$ be the unique solution of the following Poisson problem
\begin{eqnarray}\label{eq:PoissionProbSol}
\begin{tabular}{cccll}
     $\cL u $ & $=$ & $-1$ & \text{in} & $\bB$\\ 
     $u$ & $=$ & $0$ & \text{on} & $\bS$.
\end{tabular}
\end{eqnarray}
The existence of a unique smooth solution $u$ is guaranteed by \cite{gilbarg1977elliptic}. For $\varepsilon > 0$ we define the function $v_\varepsilon$ by
\begin{equation}\label{eq:veps}
    v_\varepsilon := v^\star + \varepsilon u - \varepsilon
\end{equation}
We will see that the function $v_\varepsilon$ is strictly feasible and
% \begin{equation*}
% \begin{tabular}{cll}
%    $\cL v_\varepsilon$ & $= \cL v^\star + \varepsilon \cL \phi- \varepsilon \cL 1 = \varepsilon (-1) = -\varepsilon$ & on $\mathring{\bB}$\\
%        $v_\varepsilon$ & $=v^\star + u - \varepsilon = g -\varepsilon$ & on $\bS.$
%    \end{tabular}
%\end{equation*}
%  \begin{align*}
%     \cL v_\varepsilon & = \cL v^\star + \varepsilon \cL u - \varepsilon \cL 1 = \varepsilon (-1) = -\varepsilon & \text{on } \mathring{\bB}\\
%         v_\varepsilon & =v^\star + \epsilon u - \varepsilon = g -\varepsilon & \text{on } \bS
% \end{align*}
for a suited choice of $\varepsilon$ and sufficiently close approximation of $v_\varepsilon$ by polynomials, the effective Positivstellensätze Corollary \ref{cor:invertedLucasStatement} and \ref{cor:schlosser-Psatz} lead to our following main result.

\begin{thm}\label{thm:ConvRate}
    Under Assumption \ref{cond:SDEDidier}, it holds
    \begin{equation*}
        0\leq v^\star(\vx_0) - v^\star_{\cT\ell}(\vx_0) \in \underset{\ell\to\infty}{\cO} \left( \ell^{-s}\right) \text{ for any } s\geq 0.
    \end{equation*}
\end{thm}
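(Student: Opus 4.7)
The plan is to follow the two-step strategy from \cite{schlosser2024convergence}: construct a strictly feasible interior point of the infinite-dimensional LP (\ref{eq:LPDidierSDE}), then approximate it by a polynomial whose memberships in $\cQ_\ell(b)$ and $\Sigma_\ell^\cT$ can be certified via Corollaries \ref{cor:invertedLucasStatement} and \ref{cor:schlosser-Psatz}. Direct computation with $\cL v^\star = 0$ and $\cL u = -1$ shows that $v_\epsilon$ from (\ref{eq:veps}) satisfies $-\cL v_\epsilon \equiv \epsilon$ on $\bB$ and $(g - v_\epsilon)|_\bS \equiv \epsilon$, so $v_\epsilon$ is strictly feasible for (\ref{eq:LPDidierSDE}) with \emph{constant} slack $\epsilon$. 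The elliptic maximum principle applied to (\ref{eq:PoissionProbSol}) gives $u \leq 0$ on $\bB$, hence $v^\star(\vx_0) - v_\epsilon(\vx_0) = \epsilon(1 - u(\vx_0)) \leq C_u\,\epsilon$ with $C_u := 1 + \|u\|_{L^\infty(\bB)}$.

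Since $v^\star \in C^\infty(\bB)$ by Remark \ref{rem:vSolutionPDE} and $u \in C^\infty(\bB)$ by standard elliptic regularity, $v_\epsilon \in C^\infty(\bB)$ with Hölder norms bounded uniformly for $\epsilon \in (0,1]$. For any $r \geq 2$, Jackson-type polynomial approximation on $\bB$ yields $p = p_{\epsilon,d} \in \R_d[\vx]$ with $\|v_\epsilon - p\|_{C^2(\bB)} \leq M_r / d^{r-2}$, where $M_r$ does not depend on $\epsilon$ or $d$. Writing $-\cL p = \epsilon - \cL(p - v_\epsilon)$ on $\bB$ and $g - p = \epsilon - (p - v_\epsilon)$ on $\bS$, and using that $\cL$ is second-order with polynomial coefficients (so $\|\cL(p - v_\epsilon)\|_{L^\infty(\bB)} \leq K\|p - v_\epsilon\|_{C^2(\bB)}$ for a constant $K$ depending only on Assumption \ref{cond:SDEDidier}), the calibration $\delta := \|v_\epsilon - p\|_{C^2(\bB)} \leq \epsilon/(2K)$ forces both $-\cL p$ and $(g - p)|_\bS$ to take values in $[\epsilon/2,\, 3\epsilon/2]$. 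Hence the ratios $(\cdot)_{\max}/(\cdot)_{\min}$ entering Corollaries \ref{cor:invertedLucasStatement} and \ref{cor:schlosser-Psatz} are bounded by the universal constant $3$, independently of $\epsilon$. Noting that $\deg(\cL p) \leq d + \mathrm{const}$, those corollaries certify feasibility of $p$ for the SDP (\ref{eq:Hierarchy+Trig}) as soon as $\ell^2 \geq K_1 d^n$ and $\ell^2 \geq K_2 d^{(n+4)/2}$; equivalently $d \leq c_n \ell^{\beta_n}$ for some $\beta_n > 0$ (equal to $2/n$ for $n \geq 4$ and to $4/(n+4)$ otherwise).

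It then suffices to balance parameters. Choosing $d$ to saturate the binding constraint above and then $\epsilon := 2KM_r/d^{r-2}$ (which preserves the calibration) gives, by feasibility of $p$,
\begin{equation*}
  0 \leq v^\star(\vx_0) - v^\star_{\cT\ell}(\vx_0) \leq v^\star(\vx_0) - p(\vx_0) \leq (C_u + 1)\,\epsilon \,\lesssim\, \ell^{-\beta_n(r-2)}.
\end{equation*}
For any target $s \geq 0$, fixing $r \geq s/\beta_n + 2$ at the outset yields the claim. The main obstacle is Step 3: the key point is that $-\cL v_\epsilon$ equals the \emph{constant} $\epsilon$ (not merely bounded below by it), and that the $C^2$-approximation error can be made strictly smaller than $\epsilon$; together this keeps the max-to-min ratio bounded by a universal constant even as $\epsilon \to 0$. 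Without this insight both ratios would scale like $1/\epsilon$, and one would only recover the polynomial rate (\ref{eq:RateNaco}) of \cite{schlosser2024convergence}.
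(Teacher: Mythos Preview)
Your argument is correct and follows essentially the same route as the paper: build $v_\varepsilon$ with \emph{constant} slack $\varepsilon$ via the auxiliary Poisson solution $u$, approximate by polynomials through Jackson, and exploit that the resulting max-to-min ratios in Corollaries~\ref{cor:invertedLucasStatement} and~\ref{cor:schlosser-Psatz} stay bounded independently of $\varepsilon$ (cf.\ the paper's Remark~\ref{rem:exploitingSmoothnessAndConstraint}); the only cosmetic difference is that the paper approximates $v^\star$ and $u$ separately while you approximate $v_\varepsilon$ in one go. The one step you leave implicit is the inequality $v^\star_{\cT\ell}(\vx_0) \leq v^\star(\vx_0)$: this does not follow from feasibility of your particular candidate $p$ but requires the elliptic maximum principle applied to $v^\star - v$ for \emph{every} feasible $v$ in~(\ref{eq:Hierarchy+Trig}), as the paper spells out (also, your sandwich $[\varepsilon/2,3\varepsilon/2]$ for $(g-p)|_\bS$ tacitly uses $K\geq 1$, which you may assume without loss).
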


\begin{proof}
    We show first that $v^\star_{\cT\ell} (\vx_0)$ is a lower bound for $v^\star(\vx_0)$ for all $\ell \in \N$. Let $\ell \in \N$ and $v\in \R[\vx]$ be feasible for (\ref{eq:Hierarchy+Trig}). By feasibility of $v$ (and Corollary \ref{cor:schlosser-Psatz}) we have $-\cL v \geq 0$ on $\bB$ and $g - v\geq 0$ on $\bS$. For the function $\Tilde{v} := v^\star - v$ it follows, using (\ref{eq:PDEv}),
    \begin{eqnarray*}
        \cL \Tilde{v} & = & \cL v^\star - \cL v = -\cL v \geq 0 \text{ on } \bB\\   v & = &  g-v \geq 0 \text{ on } \bS.
    \end{eqnarray*}
    By Assumption \ref{cond:SDEDidier}.2 the operator $\cL$ is elliptic and the maximum principle for elliptic operators \cite[Theorem 3.1]{gilbarg1977elliptic} yields for all $x_0 \in \bB$ that $v^\star(\vx_0) - v(\vx_0) = \Tilde{v}(x_0) \geq 0$ on $\bB$. Because $v$ was an arbitrary feasible point for the optimization problem (\ref{eq:Hierarchy+Trig}), we conclude $0\leq v^\star(\vx_0) - v^\star_{\cT\ell}(\vx_0)$. To show the claimed convergence rate, let $v^\star,u \in C^\infty(\bB)$ be the solution of (\ref{eq:PDEv}) and (\ref{eq:PoissionProbSol}) respectively. Let $d,k \in \N$. By the Jackson-inequality \cite{bagby2002multivariate}, there exist $p_d,q_d \in \R_d[\vx]$ such that
    \begin{subequations}
     \begin{equation}\label{eq:Jackson}
        \|v^\star - p_d\|_{C^{2}(\bB)}, \|u - q_d\|_{C^{2}(\bB)} \leq \frac{\gamma_n(k)}{d^k}
    \end{equation}
    for some constant $\gamma_n(k)$ which depends only on $n$ and $k$ but \textit{not} on $d$. Additionally, we define the following constant
    \begin{equation}\label{eq:s2}
        C := \max \left\{4,\sup\limits_{\vx \in \bB} \sum\limits_{i,j = 1}^m |a_{ij}(\vx)| + \sum\limits_{i = 1}^m |f_{0i}(\vx)|\right\},    
    \end{equation}
    for which it holds for all $h \in C^2(\bB)$
    \begin{equation}\label{eq:LvleqAv}
        |\cL h| = \left|\sum\limits_{i,j = 1}^m a_{ij} \frac{\partial^2 h}{\partial\vx_i\partial\vx_j}+ \sum\limits_{i = 1}^m f_{0i} \frac{\partial h}{\partial \vx_i}\right|\leq C \|h\|_{C^2(\bB)}.
    \end{equation}
    The condition in (\ref{eq:s2}) of $C\geq 4$ is used later. For large enough $d\in \N$ (i.e. such that $\gamma_n(k)Cd^{-k} < 1$) we set
    \begin{equation}\label{eq:choiceeps}
        \varepsilon :=\frac{2\gamma_n(k)C}{1-\gamma_n(k)Cd^{-k}} d^{-k} \in \underset{d\to\infty}{\cO}(d^{-k}).
    \end{equation}
    \end{subequations}
    and, motivated by (\ref{eq:veps}), we define the candidate $v_d$ by
    \begin{subequations}
    \begin{equation}
        v_d := p_d + \varepsilon q_d - \varepsilon \in \R_d[\vx].
    \end{equation}
    We first show a degree bound on $\ell$ for the membership of $-\cL v_d$ to $\cQ_\ell(b)$ using Corollary \ref{cor:invertedLucasStatement}. By (\ref{eq:PDEv}) and (\ref{eq:PoissionProbSol}), we have
    \begin{eqnarray}\label{eq:Lvd}
        \cL v_d & = & \cL (v^\star + \varepsilon u) - \cL ((v^\star  -p_d) + \varepsilon (u-q_d))\notag \\
        & = & -\varepsilon - \cL (v^\star  -p_d) - \varepsilon \cL (u-q_d).
    \end{eqnarray}
    By (\ref{eq:LvleqAv}), it holds $\left|\cL (v^\star  -p_d)\right| \leq C \|v^\star  -p_d\|_{C^2(\bB)}$ and $\left|\cL (u-q_d)\right| \leq C \|u-q_d\|_{C^2(\bB)}$. Inserting this with (\ref{eq:Jackson}) into (\ref{eq:Lvd}) gives, with the choice of $\varepsilon$ in (\ref{eq:choiceeps}),
    \begin{align*}
        \cL v_d & \geq - C \frac{\gamma_n(k)}{d^k} \left(1+ 2\frac{1+\gamma_n(k)Cd^{-k}}{1-\gamma_n(k)Cd^{-k}} \right) \\
        \cL v_d & \leq - C \frac{\gamma_n(k)}{d^k}.
    \end{align*}
    For $d\in \N$ large enough, such that $\gamma_n(k)Cd^{-k} \leq \frac{1}{2}$, this simplifies to
    \begin{equation}\label{eq:minmaxLvdB}
        - 7 \frac{\gamma_n(k)C}{d^k} \leq \cL v_d \leq -\frac{\gamma_n(k)C}{d^k}.
    \end{equation}
    To bound the degree of $\cL v_d$ let $\deg \mA$ and $\deg \vf_0$ denote the maximum degree of the polynomials $a_{ij}$ for $i,j \in \lint 1,n\rint$ and $f_{0i}$ for $i \in \lint 1,n\rint$ respectively. We then have
    \begin{equation}\label{eq:DegreeLv}
        \deg (\cL v_d) \leq \underset{=:\hat{d} \in \cO(d)}{\underbrace{\max\{ d- 2+ \deg \mA, d-1+ \deg \vf_0 \}}}
    \end{equation}
    \end{subequations}
    Corollary \ref{cor:invertedLucasStatement} applied to $-\cL v_d$, with lower and upper bounds taken from (\ref{eq:minmaxLvdB}) and degree (\ref{eq:DegreeLv}), yields $-\cL v_d \in \cQ_\ell(b)$ for
    \begin{equation}\label{eq:boundellBall}
        \ell^2 \geq c_n(\hat{d}) \frac{7 \frac{\gamma_n(k)C}{d^k} - \frac{\gamma_n(k)C}{d^k}}{\frac{\gamma_n(k)C}{d^k}} = 6c_n(\hat{d})
    \end{equation}
    where $c_n$ is the univariate polynomial from Theorem~\ref{thm:LucasBall}. 
    Next, we treat the membership of $g-v_d$ to $\Sigma_\ell^\cT$. We want to apply Corollary \ref{cor:schlosser-Psatz} and therefore we first need to bound $g-v_d$ from above and below on $\bS$. We have, again by (\ref{eq:PDEv}) and (\ref{eq:PoissionProbSol}),
    \begin{eqnarray*}
        g-v_d & = & g- p_d - \varepsilon q_d + \varepsilon\\
        & = & g- v^\star - \varepsilon u + \varepsilon + (v^\star  -p_d) + (u  -q_d)\\
        & \geq & \varepsilon - \|v^\star  -p_d\|_{C^2} - \|u  -q_d\|_{C^2}\\
        & \overset{\text{(\ref{eq:Jackson})}}{\geq} & \varepsilon - \frac{2\gamma_n(k)}{d^k}.
    \end{eqnarray*}
    For $d$ large enough, such that $\gamma_n(k)Cd^{-k} > \frac{1}{2}$, we get by the choice of $\varepsilon$, see (\ref{eq:choiceeps}), and from $C \geq 4$, see (\ref{eq:s2}), that $\varepsilon \geq \frac{4\gamma_n(k)}{d^k}$. An analog computation for an upper bound of $g-v_d$ on $\bS$ then gives $\frac{2\gamma_n(k)}{d^k} \leq g-v_d \leq \frac{6\gamma_n(k)}{d^k}$. Now we can apply Corollary \ref{cor:schlosser-Psatz} to get $g\circ \bpsi - v_d \circ \bpsi \in \Sigma_\ell^\cT$ for
    \begin{equation*}
        \ell^2 \geq 48d^2 (n-1) (4d+1)^\frac{n}{2} \frac{\frac{6\gamma_n(k)}{d^k}}{\frac{2\gamma_n(k)}{d^k}} = 144d^2 (n-1) (4d+1)^\frac{n}{2}.
    \end{equation*}
    Together with (\ref{eq:DegreeLv}) we get that $v_d$ is feasible (for large enough $d$ such that $\gamma_n(k)Cd^{-k} \leq \frac{1}{2}$), for $\ell \in \N$ with
    \begin{equation}\label{eq:degl}
        \ell \geq \ell_d:=  \max \{ 6c_n(\hat{d}), 144d^2 (n-1) (4d+1)^\frac{n}{2}\} ^\frac{1}{2}.
    \end{equation}
    % For $d\in \N$, taking $\ell_d$ to be the right-hand side in (\ref{eq:degl}), we have that $v_d$ is feasible for the optimization problem (\ref{eq:Hierarchy+Trig}) for
    % \begin{equation}\label{eq:elld}
    %     \ell_d \in \cO(d^{\frac{n+2}{2}}).    
    % \end{equation}
    Finally, we have a look at how fast the cost of $v_d$ in (\ref{eq:Hierarchy+Trig}), namely $v_d(\vx_0)$, approaches the optimal cost $v^\star (\vx_0)$. We have
    \begin{eqnarray}\label{eq:costrate}
        v^\star(\vx_0) - v_d(\vx_0) & = & (v^\star - p_d + \varepsilon q_d - \varepsilon)(\vx_0)\notag\\
        & \leq & \|v^\star  -p_d\|_{C^2} + \varepsilon (q_d(\vx_0) + 1) \notag\\
        & \leq & \frac{\gamma_n(k)}{d^{k}} + \varepsilon \left(u(\vx_0) + \frac{\gamma_n(k)}{d^k} +1\right) \notag \\
        &\in& \underset{d\to\infty}{\cO}(d^{-k})
    \end{eqnarray}
    since $\varepsilon \in \cO(d^{-k})$ by (\ref{eq:choiceeps}). To conclude the statement, for $\ell \in \N$ large enough we need an upper bound $d_\ell$ on $d$ such that $\ell_d \leq \ell$. 
    Let us first analyze $c_n$: By~\cite[eq. (39)]{slot2022sum} we get
    \begin{subequations}
        \begin{equation} \label{eq:39}
            c_n(d) = 2(n+1)^2 d^2 \Gamma_n(\bB)_d
        \end{equation}
        where~\cite[Section 4.1]{slot2022sum} gives
        \begin{eqnarray} \label{eq:p15}
            \Gamma_n(\bB)_d^2 & \leq & \max\limits_{0\leq k \leq d} \left(1+\frac{2k}{n-1}\right) \binom{k+n-2}{k} \notag \\
            & \leq & \left(1+\frac{2d}{n-1}\right) \binom{d+n-2}{d} \notag \\
            & \leq & \left(1+\frac{2d}{n-1}\right) (d+1)^{n-2}.
        \end{eqnarray}
        %where the last inequality comes from the elementary computation
        % \begin{align*}
        %     \binom{d+n-2}{d} & = \frac{(d+n-2)!}{d!(n-2)!} \\
        %     & = \frac{(d+1)(d+2)\cdots(d+n-2)}{(n-2)!} \\
        %     & = \prod_{k=1}^{n-2} \frac{d+k}{k} =  \prod_{k=1}^{n-2} \left(\frac{d}{k}+1\right) \leq (d+1)^{n-2}
        % \end{align*}
        Hence, combining~\eqref{eq:39} and~\eqref{eq:p15} yields
        \begin{equation*}
            c_n(d) \leq 2(n+1)^2d^2\left(1+\frac{2d}{n-1}\right)^{\nicefrac{1}{2}}(d+1)^{\nicefrac{n}{2} -1}
            \end{equation*}
        and with~\eqref{eq:DegreeLv} we deduce the following asymptotic behavior:
        \begin{equation} \label{eq:cn(d)}
            c_n(\hat{d}) \in \underset{d\to\infty}{\cO}\left(d^{\nicefrac{(n+3)}{2}}\right)
        \end{equation}
    \end{subequations}
    For the second term in the expression in~\eqref{eq:degl}, we have $144d^2(n-1)(4d+1)^{\nicefrac{n}{2}} \in \underset{d\to\infty}{\cO}\left(d^{\nicefrac{(n+4)}{2}}\right)$ which we combine with~\eqref{eq:degl} and~\eqref{eq:cn(d)} to get
    $$ \ell_d \in \underset{d\to\infty}{\cO}\left(d^{\nicefrac{(n+4)}{4}}\right) $$
    By inverting this asymptotic estimate of $\ell_d$, it holds
    \begin{equation}\label{eq:dell}
        d_\ell \in \underset{\ell\to\infty}{\cO}\left(\ell^{\frac{4}{n+4}}\right)
    \end{equation}
    and, for $\ell$ large enough such that $\varepsilon > 0$ in (\ref{eq:choiceeps}) and $\gamma_n(k)Cd_\ell^k \leq \frac{1}{2}$ (this was used in (\ref{eq:minmaxLvdB})), the polynomial $v_{d_\ell}$ is feasible for (\ref{eq:Hierarchy+Trig}) for $\ell$. Formulating (\ref{eq:costrate}) in terms of $\ell$ gives, using the estimate \eqref{eq:dell}, the convergence rate
    \begin{equation}
        v^\star(\vx_0) - v_{d_\ell}(\vx_0) \in \cO\left(\ell^{-\frac{4k}{n+4}}\right).
    \end{equation}
    Because $k\in \N$ was arbitrary the statement follows.
\end{proof}

%The statement in Theorem \ref{thm:ConvRate} could, at first glance, make the impression that the convergence rate is not affected by the dimension $n$ -- this is not the case. We discuss the the convergence rate shortly in the following remark.

\begin{rem}
    The convergence rate in Theorem \ref{thm:ConvRate} is not independent of the dimension $n$. In the proof, for each $s\geq 0$, we find constants $M(n,s)$ such that $v^\star(\vx_0) - v^\star_{\cT\ell}(\vx_0) \leq M(n,s) \ell^{-s}$ holds asymptotically as $\ell \rightarrow \infty$. However, the constants $M(n,s)$ are not uniformly bounded in $(n,s)$.% To the constants $M(n,s)$ contribute, among others, the term $\gamma_n(k)$ from (\ref{eq:Jackson}) and the function $c_n$ from (\ref{eq:cn(d)}). Those terms increase with increasing $s$ and $n$, and even diverge as $s,n\rightarrow \infty$. As a consequence, we do not bound $M(n,s)$ uniformly in $(n,s)$, but only show that $M(n,s)< \infty$ for each $n \in \N$ and $s\geq 0$. Therefore, for each fixed $s$ a dependence on $n$ in the convergence rate is present.
\end{rem}

\begin{rem}\label{rem:exploitingSmoothnessAndConstraint}
    The reason why we overcome the polynomial convergence rates from \cite{schlosser2024convergence} for the problem (\ref{eq:HierarchyLPDidier}) is threefold. First, the optimal solution $v^\star$ of (\ref{eq:LPDidierSDE}) is smooth (we used this most efficiently in (\ref{eq:costrate})). Second, we use specialized Positivstellensätze with polynomial scaling in the degree $d$. Third, the solution $v^\star$ satisfies the inequality constraints in (\ref{eq:LPDidierSDE}) with equality. This affected the terms  $\frac{p_\mathrm{max} - p_\mathrm{min}}{p_\mathrm{min}}$ and $\frac{p_\mathrm{max}}{p_\mathrm{min}}$ when we applied Corollary \ref{cor:invertedLucasStatement} and \ref{cor:schlosser-Psatz}. Namely, for the candidate functions $v_{d_\ell}$ the corresponding terms are in $\cO(1)$ as $\ell \rightarrow \infty$ even though the denominator tended to zero.
\end{rem}

\begin{comment}
For easier readability, we compressed the convergence rate in Theorem \ref{thm:ConvRate} into the Landau notation $\cO(\ell^{-s})$ for any $s \geq 0$. In the following remark, we state more explicitly the convergence rate obtained in the proof of Theorem \ref{thm:ConvRate}.
\begin{rem}
    In the proof of Theorem \ref{thm:ConvRate}, we showed the following (more explicit) convergence rate: For $n\in \N$ let $h_n:\N\rightarrow \N$ be the function
    \begin{equation*}
        h_n(d) := \max \{ 6 \Tilde{c}_n(d), 144d^2 (n-1) (4d+1)^\frac{n}{2}\} ^\frac{1}{2},
    \end{equation*}
    where $\Tilde{c}_n(d) := c_n(\max\{ d- 2+ \deg a, d-1+ \deg f_0 \})$ for the degree $n$ polynomial $c_n$ from Theorem \ref{thm:LucasBall}. For $\ell,k \in \N$ let $\gamma_n(k)$ be the constant from (\ref{eq:Jackson}) and
    \begin{equation*}
        d = d(\ell,n) := \max\{ a \in \N: h_n(a)\leq \ell  \}.
    \end{equation*}
    Then it holds
    \begin{equation*}
        v^\star(\vx_0) - v^\star_{\cT,\ell}(\vx_0) \leq d(\ell,n)^{-k} \gamma_n(k) (1+4Cu(x_0)+8C))
    \end{equation*}
    for the constant $C$ from (\ref{eq:s2}) and the solution $u$ of (\ref{eq:PoissionProbSol}).
\end{rem}
\end{comment}

\section{Conclusion}
    We continue the pathway of recent research and interest in convergence rates in the moment-SoS hierarchy for generalized moment problems. We focus on the example from \cite{henrion2023moment} of computing exit location estimations for stochastic processes, and, by restricting to the unit ball, we improve our recent polynomial rate in \cite{schlosser2024convergence} to a super-polynomial convergence rate. Such a convergence rate is remarkable for the moment-SoS hierarchy because it lies in between the recent polynomial convergence rates for static polynomial optimization on the hypercube respectively the sphere \cite{baldi2024degree,slot2022sum,fang2021quantum,baldi2023effective,bach2023exponential}, and the exponential rate in \cite{bach2023exponential} which is obtained under additional regularity assumptions.

    The main ingredients for our improvement compared to recent work \cite{schlosser2024convergence} are: First, we restrict to a problem setting that allows us to apply specialized Positivstellensätze. More precisely, we apply a Positivstellensatz on the unit ball from \cite{slot2022sum} (see Theorem \ref{thm:LucasBall}) and we present a new Positivstellensatz on the sphere, see Corollary \ref{cor:schlosser-Psatz}. Secondly, we exploit two strong properties of the problem at hand and their interaction with the chosen Positivstellensätze, see also Remark \ref{rem:exploitingSmoothnessAndConstraint}.
    
    Consequently, the convergence rate from Theorem \ref{thm:ConvRate} does not need to transfer to moment-SoS hierarchies for other problems. However it highlights the importance of adapted and specialized Positivstellensätze and the exploitation of certain problem intrinsic properties in the moment-SoS hierarchy.
    
    Possible continuations include applying the same line of reasoning to different problems with GMP formulations or obtaining explicit bounds on the optimal value, here $v^\star$, based on computed optimal values in the hierarchy, here $v^\star_{\cT,\ell}$.

%Research directions on Positivstellensätze that would prove useful GMPs: Scaling in d, working on the set only without introducing a larger set.

\section*{Appendix}
Let $\hat{\bpsi} = (\hat{\psi}_1,\ldots,\hat{\psi}_n):\R^{n-1} \rightarrow \R^n$ be defined as $\hat{\bpsi}_i(\theta_1,\ldots,\theta_{n-1}) = \cos(\pi\theta_i)\prod\limits_{j = 1}^{i-1} \sin(\pi\theta_j)$ for $1\leq i\leq n-2$ and $\hat{\bpsi}_{n-1}(\theta_1,\ldots,\theta_{n-1}) = \prod\limits_{j = 1}^{n-2} \sin(\pi\theta_j) \cos(2\pi \theta_{n-1})$ and $\hat{\bpsi}_n(\theta_1,\ldots,\theta_{n-1}) = \prod\limits_{j = 1}^{n-2} \sin(\pi\theta_j) \sin(2\pi \theta_{n-1})$. The map $\hat{\bpsi}$ restricted to $[0,1]^{n-2}\times [0,1)$ is called spherical coordinates because it is injective and it holds, see for instance \cite{blumenson1960derivation},
\begin{equation}\label{eq:ImageHatPhi}
    \hat{\phi}\left( [0,1]^{n-2}\times [0,1)\right) = \bS.
\end{equation}
The function $\hat{\bpsi}$ is 2-periodic in each variable; therefore we consider
\begin{equation}\label{eq:DefPsi}
    \bpsi (\theta) := \hat{\bpsi}(2\theta).    
\end{equation}
for which each component is a trigonometric polynomial of bandwidth $2$ (because the highest frequency appears in the last two components $\psi_{n-1}$ and $\psi_n$ and is $2$). Further, it holds
\begin{equation}\label{eq:ImagePsi}
    \bpsi([0,1]^{n-1}) = \bS.    
\end{equation}
This follows from (\ref{eq:ImageHatPhi}), i.e. $\bpsi([0,\frac{1}{2}]^{n-1}) = \bS$, and the identities $\sin (2\pi (\frac{1}{2} + \vx)) = -\sin(2\pi \vx)$, $\cos (2\pi (\frac{1}{2} + \vx)) = -\cos(2\pi \vx)$ and $\sin(4\pi (\frac{1}{2} + \vx)) = \sin(4\pi \vx)$, $\cos(4\pi (\frac{1}{2} + \vx)) = \cos(4\pi \vx)$ applied to each of the coordinates of $\bpsi$.% (\ref{eq:sphericalCoordinates}), (\ref{eq:sphericalCoordinatesN-1}) and (\ref{eq:sphericalCoordinatesN}).

\section*{Acknowledgements}

The authors are grateful to Didier Henrion, Francis Bach, and Alessandro Rudi for their feedback on this work.

\bibliographystyle{plain} % use IEEEtran.bst style
\bibliography{references}

\end{document}